\documentclass[11pt]{article}
\usepackage[letterpaper,margin=1in]{geometry}

\usepackage{xcolor}
\definecolor{darkblue}{rgb}{0,0,0.5}
\definecolor{darkred}{rgb}{.8,0,0}
\definecolor{darkgreen}{rgb}{0,.5,0}
\usepackage[colorlinks=true,linkcolor=darkblue,citecolor=darkblue,urlcolor=darkblue]{hyperref}

\usepackage{amssymb, url}
\usepackage{amsthm}
\usepackage{amsmath}
\usepackage{verbatim}
\usepackage{graphicx}
\usepackage{caption}
\usepackage{subcaption}
\usepackage{epstopdf}
\usepackage{setspace}
\usepackage{enumerate}
\usepackage{float}
\usepackage{tikz}
	\usetikzlibrary{arrows}
	\usetikzlibrary{calc}
\usepackage{array}
\usepackage{longtable}
\usepackage{underscore}

\newcommand{\ctt}{3\mathord{\circlearrowleft} 3}
\newcommand{\ctf}{3\mathord{\circlearrowleft} 4}
\newcommand{\ctF}{3\mathord{\circlearrowleft} 5}
\newcommand{\cts}{3\mathord{\circlearrowleft} 6}
\newcommand{\cff}{4\mathord{\circlearrowleft} 4}
\newcommand{\dtft}{3\mathord{\xleftrightarrow{1}}3}
\newcommand{\dtFt}{3\mathord{\xleftrightarrow{2}}3}
\newcommand{\dtst}{3\mathord{\xleftrightarrow{3}}3}
\newcommand{\dtff}{3\mathord{\xleftrightarrow{1}}4}
\newcommand{\dtFf}{3\mathord{\xleftrightarrow{2}}4}
\newcommand{\dtsf}{3\mathord{\xleftrightarrow{3}}4}
\newcommand{\dfff}{4\mathord{\xleftrightarrow{1}}4}
\newcommand{\dfFf}{4\mathord{\xleftrightarrow{2}}4}
\newcommand{\dfsf}{4\mathord{\xleftrightarrow{3}}4}
\newcommand{\cfFF}{4\mathord{\circlearrowleft} 55}
\newcommand{\cfFs}{4\mathord{\circlearrowleft} 56}
\newcommand{\cfss}{4\mathord{\circlearrowleft} 66}
\newcommand{\cFFs}{5\mathord{\circlearrowleft} 56}
\newcommand{\cFss}{5\mathord{\circlearrowleft} 66}
\newcommand{\rFFF}{5\mathord{\circlearrowleft} 5\mathord{*}5}

\newcommand{\cfFzF}{4\mathord{\circlearrowleft} 5\mathord{*}5}
\newcommand{\cfFzs}{4\mathord{\circlearrowleft} 5\mathord{*}6}
\newcommand{\cfszs}{4\mathord{\circlearrowleft} 6\mathord{*}6}
\newcommand{\cFfzF}{5\mathord{\circlearrowleft} 4\mathord{*}5}
\newcommand{\cFFFzs}{5\mathord{\circlearrowleft} 55\mathord{*}6}
\newcommand{\cStzF}{7\mathord{\circlearrowleft} 3\mathord{*}5}
\newcommand{\cStzzF}{7\mathord{\circlearrowleft} 3\mathord{*}\mathord{*}5}
\newcommand{\cSfzF}{7\mathord{\circlearrowleft} 4\mathord{*}5}
\newcommand{\cSfzzF}{7\mathord{\circlearrowleft} 4\mathord{*}\mathord{*}5}
\newcommand{\cSFFzF}{7\mathord{\circlearrowleft} 55\mathord{*}5}
\newcommand{\cetzFFzFF}{8\mathord{\circlearrowleft} 3\mathord{*}55\mathord{*}55}

\newcommand{\dfofo}{4\mathord{\xleftrightarrow{\leq 3}}4}
\newcommand{\dthfo}{3\mathord{\xleftrightarrow{\leq 3}}4}
\newcommand{\dthth}{3\mathord{\xleftrightarrow{\leq 3}}3}
\newcommand{\ctsM}{3\mathord{\circlearrowleft} 6^-}

\newcommand{\PC}{Precolorings: \newline}
\newcommand{\RT}{Runtime$^\star$: \newline}
\newcommand{\RTC}{Runtime$^\dagger$: \newline}

\newcommand{\whisk}[2]{%
\draw #1 -- ++(#2:1cm) 		coordinate (A)
	-- ++(#2+30:1cm)		coordinate (B)
	(A) -- ++(#2-30:1cm)		coordinate (C);
	
\draw [fill=white] (A) circle (5.5pt);
\draw [fill=white] (B) circle (5.5pt);
\draw [fill=white] (C) circle (5.5pt);
}
\newcommand{ %
	\begin{tikzpicture}[scale = .35]
		\input{}
	\end{tikzpicture}
}[2][.35]{ %
	\begin{tikzpicture}[scale = #1]
		\input{#2}
	\end{tikzpicture}
}

\newcounter{results}

\newtheorem{thm}[results]{Theorem}

\newtheorem{lem}[results]{Lemma}

\newtheorem{conj}[results]{Conjecture}

\makeatletter
\newcommand\xleftrightarrow[2][]{%
  \ext@arrow 9999{\longleftrightarrowfill@}{#1}{#2}}
\newcommand\longleftrightarrowfill@{%
  \arrowfill@\leftarrow\relbar\rightarrow}
\makeatother

\title{The chromatic number of the square of subcubic planar graphs}

\author{
Stephen G. Hartke\thanks{
    Dept.\ of Math.\ and Stat.\ Sciences,
    Univ.\ of Colorado Denver,
    stephen.hartke@ucdenver.edu.  Research supported in part by a Collaboration Grant from the Simons Foundation (\#316262 to Stephen G. Hartke).}
\and
Sogol Jahanbekam\thanks{
    Dept. of Math. and Stat. Sciences,
    Univ. of Colorado Denver,
    Sogol.Jahanbekam@ucdenver.edu}
\and
Brent Thomas\thanks{
    Dept. of Math. and Stat. Sciences,
    Univ. of Colorado Denver,
    Brent.Thomas@ucdenver.edu}
}

\begin{document}
\newcommand{\lbl}[1]{\mathcal{L}(#1)}
\maketitle

\begin{abstract}
  Wegner conjectured in 1977 that the square of every planar graph with maximum degree at most $3$ is $7$-colorable.
  We prove this conjecture using the discharging method and computational techniques to verify reducible configurations.
  \bigskip

  \noindent \emph{Mathematics Subject Classification}: Primary 05C15; Secondary 05C10, 68R10.
  
  \noindent \emph{Keywords}: coloring, square, subcubic, planar graph, discharging, computational proof.
\end{abstract}

\section{Introduction}

Given a simple graph $G$ with vertex set $V(G)$, the \emph{square} of $G$, denoted $G^2$, is the simple graph with vertex set $V(G)$ where vertices $x$ and $y$ are adjacent in $G^2$ if and only if the distance in $G$ between $x$ and $y$ is at most two.
In 1977, Wegner conjectured the following upper bounds on the chromatic numbers of squares of planar graphs.

\begin{conj}[Wegner~\cite{Wegner1977}]\label{conj:Wegner}
  Let $G$ be a planar graph with maximum degree $\Delta$. 
  Then
  \[
  \chi(G^2)\le
  \begin{cases}
    7, & \text{ if $\Delta\le 3$,}\\
    \Delta+5, & \text{ if $4\le \Delta \le 7$,}\\
    \left\lfloor\frac{3\Delta}{2}\right\rfloor+1, & \text{ if $\Delta\ge 8$.}
  \end{cases}
  \]
\end{conj}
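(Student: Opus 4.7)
The plan is to proceed by contradiction, taking $G$ to be a vertex-minimum counterexample: a planar graph with $\Delta(G)\le 3$ for which $\chi(G^2)\ge 8$, but every proper subgraph $H$ of $G$ satisfies $\chi(H^2)\le 7$. First I would collect easy global structural facts about $G$: it is connected, it has no cut-vertex whose removal separates a small piece that can be pre-colored and extended, and every vertex has degree exactly $3$. For the last claim, if $v\in V(G)$ has degree at most $2$, then $G-v$ is $7$-colorable in its square by minimality, and since $v$ has at most $\deg(v)+\deg(v)(\Delta-1)\le 6$ vertices at distance $\le 2$ in $G$, we can extend the coloring. So $G$ is cubic and planar, and in particular $G^2$ has maximum degree at most $6$.

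Next I would build a catalogue of \emph{reducible configurations}: local substructures $H\subseteq G$ such that every $7$-coloring of $(G-H)^2$ (which exists by minimality, after suitable modification like contracting or replacing $H$ by a smaller gadget) can be extended to a proper $7$-coloring of $G^2$. The macros in the preamble (e.g.\ $\ctt,\ctf,\ldots,\cff$ for faces of small length incident to low vertices, and $\dtft,\ldots,\dfsf$ for pairs of short faces at small distance) suggest that the reducible configurations are organized by the face-lengths around $3$- and $4$-faces together with the short distances between such faces. For each candidate configuration I would, in principle, enumerate all pre-colorings of the boundary vertices by the $7$ colors (at most a fixed constant depending only on the size of the configuration), and check that each pre-coloring extends. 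This is precisely where the \emph{computational} part of the proof does the heavy lifting: rather than arguing by hand for each configuration, a program enumerates the (bounded) color list assignment on the internal vertices and certifies extendability.

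With the reducibility catalogue in hand, I would run a discharging argument. Assign each vertex $v$ initial charge $\deg(v)-4$ and each face $f$ initial charge $\ell(f)-4$, where $\ell(f)$ is the length of $f$. By Euler's formula
\begin{equation*}
\sum_{v\in V(G)}(\deg(v)-4) + \sum_{f\in F(G)}(\ell(f)-4) = -8.
\end{equation*}
Since $G$ is cubic, every vertex contributes $-1$, so all the negative charge lives on vertices and on $3$-faces (charge $-1$). I would design local discharging rules so that charge flows from longer faces (charge $\ge 1$) toward $3$-vertices and short faces, using precisely the absence of the reducible configurations to guarantee that enough large faces lie within reach of each negative-charge element. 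After redistribution the aim is to show every vertex and face has nonnegative final charge, contradicting the equation above.

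The hard part will be calibrating the reducible configurations with the discharging rules so that the two match. Because $\Delta=3$ leaves very little slack (after discharging, some vertices have final charge exactly zero), the list of unavoidable reducible configurations is expected to be long and the rules delicate; this is the reason a purely hand-written proof has resisted efforts since 1977 and why computer verification of reducibility is essential. I would therefore iterate: start with natural discharging rules, identify where final charge remains negative, and add further reducible configurations (or refine the rules) to handle exactly those local structures, until every structure permitted by the reducibility catalogue has nonnegative final charge.
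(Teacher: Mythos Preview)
First, a scope remark: the statement you were given is Wegner's full conjecture for all $\Delta$, but your plan (like the paper itself) only addresses the case $\Delta\le 3$. The paper does not prove the conjecture for $\Delta\ge 4$; it proves only Theorem~\ref{main}. So your proposal should be read as a plan for that case, not for the conjecture in general.

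With that caveat, your high-level strategy---minimal counterexample, a computer-verified catalogue of reducible configurations indexed by short faces and their proximities, and a discharging contradiction---is exactly the paper's. Two points of comparison are worth noting.

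\emph{Charging scheme.} You use balanced vertex--face charging $\deg(v)-4$, $\ell(f)-4$ summing to $-8$. The paper instead uses pure face charging $\ell(f)-6$ summing to $-12$ (Lemma~\ref{lem:facesum}). For a \emph{cubic} graph the pure face scheme is cleaner: every vertex would carry the same charge $-1$ in your scheme, so vertices add bookkeeping without carrying structural information, and you would need extra rules to feed each vertex. With $\ell(f)-6$ only $3$-, $4$-, and $5$-faces are negative, and the three discharging rules (R1)--(R3) in the paper simply push $1$, $\tfrac{2}{3}$, $\tfrac{1}{3}$ from each adjacent $7^+$-face to a $3$-, $4$-, $5$-face respectively. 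Your scheme can be made to work, but it is less economical here.

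\emph{3-connectedness.} You mention ``no cut-vertex'' in passing, but the paper proves more: a minimal counterexample is $3$-connected (Lemma~\ref{cubic}). This is not cosmetic. It guarantees that every face boundary is a cycle, that every edge separates two distinct faces, and that an $r$-face has $r$ distinct adjacent faces---facts the face-by-face discharging analysis relies on. Your plan should include this step explicitly.

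Finally, a small error: after concluding $G$ is cubic you write ``$G^2$ has maximum degree at most $6$.'' That is false; a degree-$3$ vertex can have up to $3+3\cdot 2=9$ neighbours in $G^2$. The bound $6$ applies only to the removed degree-$\le 2$ vertex in your extension step, not to $G^2$ globally. The slip is not load-bearing for the discharging argument, but it should be deleted.
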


In this paper we prove Wegner's conjecture when the maximum degree $\Delta$ is at most $3$.  We use the term \emph{subcubic} for a graph with maximum degree at most $3$.

\begin{thm}\label{main}
  If $G$ is a subcubic planar graph, then $\chi(G^2)\leq 7$.
\end{thm}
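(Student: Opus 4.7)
The plan is to argue by contradiction using a minimum counterexample together with the discharging method, supplemented by computer verification of a library of reducible configurations. Let $G$ be a subcubic planar graph with $\chi(G^2)\ge 8$ minimizing $|V(G)|+|E(G)|$. First I would dispose of easy cases: if $G$ has a vertex of degree at most one, or a cut vertex, or a pair of non-adjacent vertices whose identification produces a smaller subcubic planar graph, then minimality yields a proper $7$-coloring of $G^2$ by extending a coloring of the reduced graph. In particular, one may assume $G$ is $2$-connected and $3$-regular, and low-girth structure forces many nearby vertex pairs that constrain the local picture.

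Next, the core technical step is building a list of forbidden local configurations and proving each is reducible. Typical configurations consist of a short cycle (of length $3$, $4$, or $5$) together with a controlled arrangement of nearby short cycles, chords, or short paths between them. Reducibility is shown by deleting a carefully chosen small piece, appealing to minimality for a $7$-coloring of the remainder, and then showing that every restriction of such a coloring to the boundary of the deleted piece extends to all of $G^2$. Because the number of boundary precolorings grows rapidly with the size of the deleted piece, this enumeration is delegated to a computer routine that iterates over every precoloring of the frontier in $G^2$, checks whether it admits a completion, and flags any obstruction; our reducible library is precisely the collection of configurations for which the code reports no obstructions.

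With the reducible configurations in hand, the third step is a discharging argument on a fixed planar embedding. By Euler's formula, assigning initial charge $d(v)-4$ to each vertex and $\ell(f)-4$ to each face gives total charge $-8$. Since $G$ is $3$-regular, every vertex starts with charge $-1$, and only faces of length at least five carry positive initial charge. I would design discharging rules that ship charge from long faces toward incident short faces and toward vertices lying on short cycles, calibrated so that a vertex or face with non-negative final charge is forced to be incident to one of the forbidden configurations. A case analysis indexed by the local structure around each short face or low-charge vertex then shows the final charge is strictly negative throughout, contradicting the $-8$ total.

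The main obstacle is the middle step: selecting the right reducible library. It must be small enough that the discharging case analysis remains tractable by hand, yet rich enough to cover every local structure that discharging leaves with non-negative final charge. Expanding a configuration to handle a failing case typically introduces a cascade of new boundary precolorings, some of which may fail to extend, and planarity can additionally obstruct completions that look combinatorially fine. I expect most of the work to consist of iterating between refining the discharging rules and enlarging the computer-verified library until the two sides finally match.
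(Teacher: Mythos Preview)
Your overall strategy---minimal counterexample, a computer-verified library of reducible configurations, and discharging to prove unavoidability---is exactly the paper's. But the discharging step as you have written it is logically inverted and, as stated, yields no contradiction. With your charges $d(v)-4$ and $\ell(f)-4$ the total is $-8$, and discharging preserves this total. You then say the rules are calibrated so that any element with \emph{non-negative} final charge must meet a forbidden configuration, and conclude by showing ``the final charge is strictly negative throughout, contradicting the $-8$ total.'' A sum of strictly negative numbers being negative is perfectly consistent with the total equalling $-8$. The correct direction is the opposite: design the rules so that, in the absence of reducible configurations, every element ends with \emph{non-negative} final charge; then the total is at least $0$, contradicting $-8$.

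Two smaller remarks on the setup. First, the balanced charge $d(v)-4$, $\ell(f)-4$ is legal but inconvenient here: in a $3$-regular graph every vertex begins at $-1$, so each of the $|V(G)|$ vertices must be fed charge, which bloats the rule set and the case analysis. The paper uses pure face charging with $\ell(f)-6$ (total $-12$); vertices never appear, only $3$-, $4$-, and $5$-faces need charge, and the rules reduce to each $7^+$-face sending $1$, $\tfrac{2}{3}$, $\tfrac{1}{3}$ to adjacent $3$-, $4$-, $5$-faces respectively. Second, the paper proves the minimal counterexample is $3$-connected, not merely $2$-connected; this is used to guarantee that every face boundary is a cycle with distinct adjacent faces, and to justify that certain auxiliary edges (added to $G-H$ to constrain the precolorings) have distinct endpoints. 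Your $2$-connectivity would not suffice for those steps.
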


Note that the theorem is sharp, as shown by the graph $G$ in Figure~\ref{SharpnessExample}.
In fact, $G^2$ is a complete graph on $7$ vertices.
Since $G$ has vertices of degree $2$, there are an infinite number of connected subcubic planar graphs that contain $G$ and thus have chromatic number $7$ of their squares.

\begin{figure}
  \centering
	\begin{tikzpicture}[scale = .35]
		\input{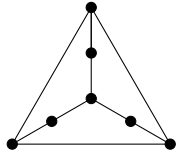}
	\end{tikzpicture}

      \caption{\label{SharpnessExample}A planar graph $G$ with $\chi(G^2)=7$.}
\end{figure} 

In the same paper that Wegner posed Conjecture~\ref{conj:Wegner}, he proved that $\chi(G^2)\leq 8$ for subcubic planar graphs.
In an unpublished work, Borodin (see \cite{JT1995}) proved that $\chi(G^2)\leq 7$ when $G$ is a subcubic planar graph and has no face of size greater than $5$.
Results of Cranston and Kim~\cite{CranstonKim2008} imply that $\chi(G^2)\leq 7$ when $G$ is a subcubic graph with girth at least $7$.
Thomassen announced a proof of Wegner's conjecture for subcubic graphs in 2006, but the proof has not yet appeared.  Our approach uses the discharging method and computation to check reducibility, which is different from Thomassen's approach~\cite{Thomassen}.

Many results related to Theorem~\ref{main} have been proven, including results on the list chromatic number $\chi_\ell$ of the square of planar graphs. 
For a thorough review of the history of coloring squares of planar graphs, we refer the reader to the survey by Borodin~\cite{Borodin2013survey}. 
Here we present the most related results.

A lower bound on the girth forces sparsity in the graph, which is often enough to prove an upper bound on the chromatic and list chromatic numbers. 
Cranston and Kim~\cite{CranstonKim2008} proved that $\chi_{\ell}(G^2)\leq 8$ for any connected subcubic graph $G$ (not necessarily planar) other than the Petersen graph.  
They also proved that for any subcubic planar graph $G$ with girth at least $7$, $\chi_{\ell}(G^2)\leq 7$ holds.
Cranston and Kim~\cite{CranstonKim2008} and independently Havet~\cite{Havet2009} proved that if $G$ is subcubic and planar with girth at least $9$, then $\chi_{\ell}(G^2)\leq 6$. 
Borodin and Ivanova~\cite{BorodinIvanova2012} proved that having girth at least $12$ in any subcubic planar graph $G$ implies $\chi(G^2)\leq 5$. 
They also proved in \cite{BorodinIvanova2012-2} that for all subcubic planar graphs $G$ of girth at least $24$, $\chi(G^2)\leq 4$.  
Note that $4$ is the best possible upper bound for graphs with maximum degree $3$, as any vertex of degree $3$ and its neighbors must have different colors in a proper coloring of the square of the graph. 

Conjecture~\ref{conj:Wegner} has also been studied for planar graphs of higher maximum degree. 
Cranston, Erman, and \v{S}krekovski~\cite{CranstonErmanSkrekovski2014} proved that if $G$ is a planar graph of maximum degree at most $4$, then having girth at least 16, 11, 9, 7, 5, and 3 guarantees $\chi_{\ell}(G^2)$ to be at most 5, 6, 7, 8, 12, and 14, respectively.  
Toward an upper bound for the general case, Molloy and Salavatipour~\cite{MolloySalavatipour2005} showed that $\chi(G^2)\leq \lceil \frac{5}{3}\Delta(G)\rceil+78$. 
Havet, van den Heuvel, McDiarmid, and Reed~\cite{HHMR2007,HHMR2008online} proved that Wegner's Conjecture is asymptotically correct, showing that $\chi(G^2)\leq \frac{3}{2}\Delta(G)(1+o(1))$ for all planar graphs $G$.

The chromatic number of the square of a graph is also studied under the name $2$-distance coloring, which is denoted by $\chi_2$. 
Generalizations of colorings of squares of graphs have also been studied, including distance coloring and $L(p,q)$-labeling of graphs (see for example  \cite{Calamoneri2006,CMR2013,Kramer2008,Yeh2006}).
The natural edge variation of this problem has been also studied under the name strong chromatic index of graphs (see for example \cite{BorodinIvanova2014,Erdos1988,FSGT1990,HHMR2013}). 

Our approach to proving Theorem~\ref{main} uses discharging.
The survey by Cranston and West~\cite{CranstonWest2013} provides a nice overview of the technique.  
Discharging was most famously used in the proofs of the Four Color Theorem by Appel and Haken with Koch~\cite{AppelHaken1977discharging,AppelHaken1977reducibility,AppelHaken1989book}, Robertson, Sanders, Seymour, and Thomas~\cite{RSST1997}, and Steinberger~\cite{Steinberger2010}.
Each of the proofs used computers to verify reducibility of configurations, and the last two proofs also used computers to verify the discharging rules.

Our proof of Theorem~\ref{main} is similar in spirit in that we also verify reducibility by computer, but our discharging rules are simple enough to verify by hand.  
In Sections~\ref{sec:reducible} and \ref{sec:table} we describe the reducible configurations and the verification of their reducibility, and in Section~\ref{sec:computation} we give details of the computation.
In Section~\ref{sec:discharging} we use discharging to prove that our set of reducible configurations is unavoidable in a minimal counterexample, thereby obtaining a contradiction and thus proving Theorem~\ref{main}.  
We conclude in Section~\ref{sec:future} with some conjectures and questions for future work.

In this paper we consider only simple finite undirected graphs.
If $G$ is a planar graph, we assume that it has a fixed planar embedding with no crossing edges.
We denote the vertex set by $V(G)$ and the set of faces by $F(G)$, and we use $\ell(f)$ to denote the length of face $f$.
For convenience, we say that an \emph{$r$-face} of $G$ is a face with length $r$, 
an \emph{$r^+$-face} is a face with length at least $r$, and
an \emph{$r^-$-face} is a face with length at most $r$.  
We call two faces \emph{adjacent} if they share at least one edge in their boundaries. 
For other definitions or notation not given, we refer the reader to the textbook by West~\cite{West2000}.

\section{Minimal counterexamples and reducible configurations}\label{sec:reducible}

We consider a \emph{minimal counterexample} to Theorem~\ref{main}: a subcubic planar graph whose square is not $7$-colorable and that has the fewest number of vertices among all such counterexamples.
We use the term \emph{cubic} for a graph that is regular of degree $3$.

\begin{lem}\label{cubic}
If $G$ is a minimal counterexample to Theorem~\ref{main}, then $G$ is cubic and $3$-connected.
\end{lem}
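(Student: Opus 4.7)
The plan is to derive the three conclusions (connectedness, cubicness, and $3$-connectedness) in sequence from the minimality of $G$. Connectedness is immediate: if $G$ were disconnected, then $\chi(G^2)$ equals the maximum of $\chi(C^2)$ over components $C$, so some component would already be a counterexample, contradicting minimality.

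For the degree bound, suppose some $v \in V(G)$ satisfies $\deg_G(v) \le 2$. Then $G - v$ is a smaller subcubic planar graph, and by minimality $(G - v)^2$ admits a proper $7$-coloring $c$. The number of colors forbidden for $v$ is at most the number of vertices of $G - v$ at $G$-distance at most $2$ from $v$, which is at most $2 + 2\cdot 2 = 6$. Hence $c$ extends to a proper $7$-coloring of $G^2$, a contradiction. Therefore $G$ is cubic.

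For the absence of a cut vertex, suppose $v$ is a cut vertex of $G$ and let $G_1, \dots, G_k$ be the subgraphs obtained from the components of $G - v$ by re-attaching $v$. Each $G_i$ is a proper subgraph of $G$, so by minimality $G_i^2$ has a proper $7$-coloring $c_i$. The key observation is that if $x \in V(G_i) \setminus \{v\}$ and $y \in V(G_j) \setminus \{v\}$ with $i \ne j$, then any $G$-path from $x$ to $y$ passes through $v$, so $d_G(x,y) \le 2$ forces $x$ and $y$ to both be neighbors of $v$. Thus the only cross-part $G^2$-edges are among the (at most three) neighbors of $v$. Since we need only ensure that $v$ together with its neighbors receive pairwise distinct colors, and there are at most $4$ such vertices against a palette of $7$, permuting colors within each $c_i$ (fixing $c_i(v)$ to a common value) suffices to combine the colorings into a proper $7$-coloring of $G^2$, a contradiction.

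For no $2$-cut, suppose $\{u, w\}$ is a $2$-cut and write $G = G_1 \cup G_2$ with $V(G_1) \cap V(G_2) = \{u, w\}$. An analogous analysis shows that cross-part $G^2$-edges occur only between pairs of vertices sharing a common neighbor in $\{u, w\}$. The plan is to augment each $G_i$ with a small gadget that records the constraint imposed by the other side — for instance, adding the edge $uw$ when it is absent (and $u, w$ have room in $G_i$), or attaching a short path through a fresh vertex to force the desired coloring distinctions at $u$ and $w$ — so that each augmented graph is a smaller subcubic planar graph whose square is $7$-colorable by minimality. A careful permutation of colors then stitches the two colorings into a proper $7$-coloring of $G^2$, contradicting that $G$ is a counterexample. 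The main obstacle is this $2$-cut step: one must perform a case analysis on the local degrees $\deg_{G_i}(u), \deg_{G_i}(w)$ and on whether $uw \in E(G)$, and verify in each case that the augmented parts remain subcubic, planar, and strictly smaller than $G$, and that the constraints encoded by the gadgets are sufficient to guarantee compatible $7$-colorings.
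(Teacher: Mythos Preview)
Your cubicness step has a genuine gap. You claim that a proper $7$-coloring $c$ of $(G-v)^2$ extends to a proper $7$-coloring of $G^2$ because $v$ has at most six $G^2$-neighbors. But extending $c$ at $v$ yields a proper coloring of $G^2$ only if $c$ is already proper on $G^2[V(G)\setminus\{v\}]$, and that need not hold: when $\deg_G(v)=2$ with neighbors $u,w$, the pair $uw$ is an edge of $G^2$ (via $v$) but may fail to be an edge of $(G-v)^2$, so $c(u)=c(w)$ is possible. The paper repairs this by adding the edge $uw$ to $G-v$ before invoking minimality; the resulting graph is still subcubic and planar, and its square contains $G^2[V(G)\setminus\{v\}]$.

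Your $2$-cut case is only a plan, not a proof: you describe attaching a ``small gadget'' and performing ``a careful permutation'', then explicitly defer the case analysis on $\deg_{G_i}(u),\deg_{G_i}(w)$ and on whether $uw\in E(G)$. That analysis is exactly where the content lies, and as written nothing has been verified. The paper sidesteps this entirely by switching from vertex cuts to \emph{edge} cuts, using that in a cubic graph vertex and edge connectivity coincide. A $2$-edge cut $\{x_1y_1,x_2y_2\}$ separates $G$ into two sides $H_x,H_y$ in which each of $x_1,x_2,y_1,y_2$ has lost exactly one incident edge; adding $x_1x_2$ to $H_x$ and $y_1y_2$ to $H_y$ therefore keeps both pieces subcubic and planar with no case split. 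After coloring the squares by minimality, each coloring is extended greedily to the two boundary vertices of the other side and a permutation matches them. The edge-cut formulation is what makes the degree bookkeeping trivial; your vertex-cut approach can be made to work, but you would need to actually carry out the cases you list.
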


\begin{proof}
Suppose $G$ contains a vertex $v$ of degree at most $2$.  Let $H$ be the graph $G-v$.
If $v$ has degree $2$ and the neighbors of $v$ are not adjacent, add the edge making them adjacent in $H$. 
Since $H$ is subcubic, planar, and has fewer vertices than $G$, $\chi(H^2)\le 7$.
Note that two different neighbors of $v$ are adjacent in $G^2$, and since these vertices were made adjacent in $H$, they receive different colors in every $7$-coloring of $H^2$.
A $7$-coloring of $H^2$ extends to a $7$-coloring of $G^2$ since $v$ has at most six neighbors in $G^2$.
This contradicts the fact that $G^2$ is not $7$-colorable.  Hence $G$ is cubic.

In a cubic graph, the vertex connectivity is equal to the edge connectivity (see for example Theorem~4.1.11 in \cite{West2000}).  Hence we only need to show that $G$ is $3$-edge-connected.  If $G$ is not connected, then the square of each component of $G$ is $7$-colorable by the minimality of $G$, and together these colorings provide a $7$-coloring of $G^2$.  

If $G$ is connected but not $2$-edge-connected, then $G$ contains a cut edge $e$.  Let $x$ and $y$ be the endpoints of $e$.
Let $H_x$ be the component of $G-e$ containing $x$, and similarly define $H_y$.
Since $H_x$ and $H_y$ are subcubic and planar and have fewer vertices than $G$, then by the minimality of $G$ both $H_x^2$ and $H_y^2$ are $7$-colorable.  Let $\phi_x$ and $\phi_y$ be $7$-colorings of $H_x^2$ and $H_y^2$, respectively.
  
Let $w_1$ and $w_2$ be the neighbors of $x$ in $H_x$, and let $z_1$ and $z_2$ be the neighbors of $y$ in $H_y$.
Note that $x$, $w_1$, and $w_2$ are pairwise adjacent in $H_x^2$, and so receive different colors in $\phi_x$.  Similarly, $y$, $z_1$, and $z_2$ are colored differently in $\phi_y$.
The only additional adjacencies in $G^2$ that are not present in $H_x^2 \cup H_y^2$ are among the vertices $\{x,y,w_1,w_2,z_1,z_2\}$.
We thus permute the names of the colors in $\phi_y$ so that six different colors are assigned to these six vertices in $\phi_x$ and $\phi_y$.
Together $\phi_x$ and $\phi_y$ is a $7$-coloring of $G^2$.
Hence, $G$ is $2$-edge-connected.

If $G$ is not $3$-edge-connected, then $G$ contains an edge cut $\{e_1,e_2\}$. Since $G$ is cubic and $2$-edge-connected, the edges $e_1$ and $e_2$ have distinct endpoints.  
Label the endpoints of $e_1$ as $x_1$ and $y_1$ and the endpoints of $e_2$ as $x_2$ and $y_2$,
so that $x_1$ and $x_2$ belong to the same component $H_x$ of $G-\{e_1,e_2\}$ and $y_1$ and $y_2$ belong to the same component $H_y$ of $G-\{e_1,e_2\}$.

Let $H'_x$ be the graph formed by adding the edge $x_1 x_2$ to $H_x$.  
If $x_1 x_2$ is already present in $H_x$, then $H'_x$ is just $H_x$.  
Similarly, form $H'_y$ by adding the edge $y_1 y_2$ to $H_y$.
The graphs $H'_x$ and $H'_y$ are shown in Figure~\ref{fig:ThreeEdgeConn}.
Each of $H'_x$ and $H'_y$ is subcubic and planar and has fewer vertices than $G$.
Therefore $\chi({H'_x}^2)\leq 7$ and $\chi({H'_y}^2)\leq 7$. 
Let $\phi_x$ and $\phi_y$ be $7$-colorings of ${H'_x}^2$ and ${H'_y}^2$, respectively.  
Note that $\phi_x$ is a $7$-coloring of $H_x^2$ where $x_1$ and $x_2$ receive different colors, 
and that $\phi_y$ is a $7$-coloring of $H_y^2$ where $y_1$ and $y_2$ receive different colors.

\begin{figure}
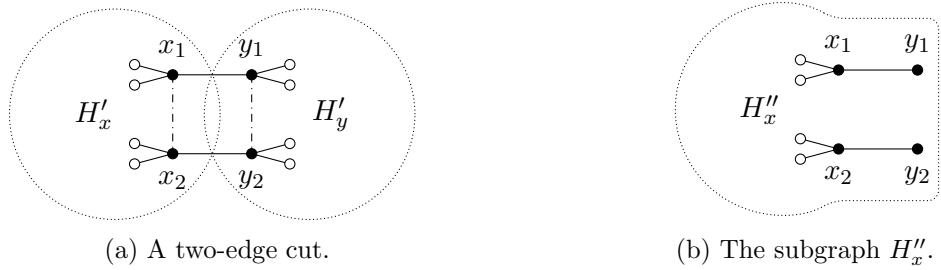

  \centering
    \begin{subfigure}[b]{0.5\textwidth}
    \centering
	\begin{tikzpicture}[scale = .35]
		\input{images/threeedge}
	\end{tikzpicture}

            \caption{A two-edge cut.}
        \label{fig:ThreeEdgeConn}
    \end{subfigure}
    ~ %
      \begin{subfigure}[b]{0.4\textwidth}
      \centering
	\begin{tikzpicture}[scale = .35]
		\input{images/threeedge2}
	\end{tikzpicture}

            \caption{The subgraph $H_x''$.}%
        \label{fig:ThreeEdgeConn2}
    \end{subfigure}
    \caption{The case when $G$ has a two-edge cut.} %
\end{figure} 

Let $H''_x$ be the subgraph of $G$ induced by $V(H_x)\cup \{y_1,y_2\}$, and 
let $H''_y$ be the subgraph of $G$ induced by $V(H_y)\cup \{x_1,x_2\}$.
The graph $H''_x$ is shown in Figure~\ref{fig:ThreeEdgeConn2}.
We extend $\phi_x$ to a $7$-coloring $\phi''_x$ of ${H''_x}^2$ where $x_1$, $x_2$, $y_1$, and $y_2$ all receive different colors.
Note that $y_1$ has at most four colors precluded under $\phi_x$ by $x_2$ and at most three neighbors in ${H''_x}^2$, and so a color may be chosen for $y_1$.
After coloring $y_1$, the vertex $y_2$ has at most five colors precluded by $x_1$, $y_1$, and at most three neighbors in ${H''_x}^2$, and so a color may be chosen for $y_2$. 
Thus $\phi''_x$ is a $7$-coloring of ${H''_x}^2$ where $x_1$, $x_2$, $y_1$, and $y_2$ all receive different colors.
Similarly, we extend $\phi_y$ to a $7$-coloring $\phi''_y$ of ${H''_y}^2$ where $x_1$, $x_2$, $y_1$, and $y_2$ all receive different colors.
Additionally, we permute the name of colors in $\phi''_y$ so that $\phi''_y$ assigns the same colors to $x_1$, $x_2$, $y_1$, and $y_2$ as $\phi''_x$.

Since $\{x_1, x_2, y_1, y_2\}$ is a cut set in $G^2$, there are no additional adjacencies in $G^2$ that are not present in ${H''_x}^2 \cup {H''_y}^2$.
Thus $\phi''_x$ and $\phi''_y$ together is a $7$-coloring of $G^2$, contradicting the fact that $G$ is a counterexample to Theorem~\ref{main}.
Hence $G$ is $3$-edge-connected. 
\end{proof}

We now proceed to find subgraphs that cannot appear in a minimal counterexample.
Following traditional terminology used in discharging proofs, an induced subgraph that cannot appear in a minimal counterexample is called a \emph{reducible configuration}.

Table~\ref{reducible-configurations} contains a list of 31 reducible configurations that we will use to prove Theorem~\ref{main}. 
For each configuration $H$, the black vertices are the vertex set of the induced subgraph $H$ and the white vertices are a subset of $V(G)-V(H)$ that are distance at most 2 from $H$. 
We use two types of notation to describe the configurations in Table~\ref{reducible-configurations}. 

First, to describe a face and some of the adjacent faces we use the symbol $\circlearrowleft$.
The length of the central face appears before $\circlearrowleft$, and a list of lengths of faces appearing in consecutive order counterclockwise around the central face appears afterwards.  
Note that the list might be shorter than the length of the central face, in which case the remaining adjacent faces do not have specified lengths.  
For example, a 4-face with two consecutive adjacent 5-faces is denoted $\cfFF$. 
We also use $*$ to denote adjacent faces whose lengths are unspecified.  
For example, a 4-face with two adjacent 5-faces that are not adjacent to each other is denoted $\cfFzF$. 

Second, we use $\xleftrightarrow{k}$ to denote two faces at distance $k$.
Recall that the distance between two faces is the shortest distance between a vertex of one face and a vertex of the other face.
For example, two $3$-faces at distance $2$ is denoted $\dtFt$. 
For convenience we use $\xleftrightarrow{\leq k}$ to denote two faces at distance at most $k$; this represents $k$ configurations.
Note that in a subcubic graph, faces at distance $0$ share an edge.

\begin{lem}\label{lem:no-reducible-configs}
  None of the 31 configurations in Table~\ref{reducible-configurations} appear in a minimal counterexample to Theorem~\ref{main}.
\end{lem}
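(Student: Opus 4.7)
The plan is to treat the 31 configurations uniformly and argue reducibility by a deletion-and-extension argument, with the extension step delegated to the computational verification promised in Sections~\ref{sec:table} and \ref{sec:computation}. Fix a configuration $H$ in the table and suppose it appears as the induced subgraph on the black vertices inside a minimal counterexample $G$. Form the smaller graph $G' = G - V(H)$; this is still subcubic and planar, and has strictly fewer vertices than $G$, so by the minimality of $G$ there is a proper $7$-coloring $\phi$ of $(G')^2$. The goal is to extend $\phi$ to a proper $7$-coloring of $G^2$ by choosing colors for the vertices of $V(H)$, contradicting that $G$ is a counterexample. Lemma~\ref{cubic} guarantees $G$ is cubic and $3$-connected, which constrains how $V(H)$ sits in $G$: each black vertex has degree exactly $3$, and the white vertices are exactly the vertices of $G-V(H)$ that lie within distance $2$ of $V(H)$, since these are the only vertices whose colors can constrain the extension.

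The combinatorial heart of each reduction is the \emph{precoloring} analysis. The restriction of $\phi$ to the white vertices is forced, and each black vertex $b \in V(H)$ must receive a color that avoids (i) the colors of the at most six vertices within distance $2$ of $b$ in $G$, whether these are white (precolored) or black (to be chosen), and (ii) the colors of other black vertices at distance $\leq 2$ from $b$. For a fixed precoloring $\psi$ of the white vertices, this is a list-coloring problem on the small graph induced by $V(H)$ in $G^2$, with lists of size $7$ minus the number of distinct precolored neighbors. To prove $H$ reducible, one enumerates every precoloring of the white vertices that can arise as a restriction of a $7$-coloring of $(G')^2$ (i.e., every assignment respecting the proper-coloring constraints in $(G')^2$ among the white vertices) and verifies, for each, that at least one valid extension to the black vertices exists. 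When a precoloring cannot be directly extended, the flexibility in choosing $\phi$ is exploited by locally recoloring white vertices (swapping colors along short chains inside $(G')^2$) to replace $\psi$ by a favorable precoloring that does extend.

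The number of precolorings for a given configuration can be very large (recorded by the \texttt{Precolorings} entry in Table~\ref{reducible-configurations}), so the actual enumeration and extension search is performed by computer; Sections~\ref{sec:table} and \ref{sec:computation} describe the algorithm, the symmetry reductions used to collapse equivalent precolorings, and the recorded runtimes. I would expect the main obstacle to be precisely this combinatorial explosion: naively iterating over all colorings of the neighborhood is infeasible for the larger configurations (those involving three consecutive small faces, or pairs of faces at distance $2$ or $3$ with many white vertices on the combined boundary). Controlling this requires pruning partial precolorings as soon as they force an irrecoverable conflict, exploiting the automorphism group of each configuration, and, for the hardest cases, invoking the recoloring step above rather than demanding a direct extension. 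Once these computational ingredients are in place, the lemma follows one configuration at a time.
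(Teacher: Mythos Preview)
Your overall framework---delete the black vertices, invoke minimality to $7$-color the square of the remainder, then extend via computer search over precolorings of the white boundary---matches the paper. But there is a genuine gap in how you handle configurations for which \emph{not} every precoloring extends (the paper names $\dtFf$ and $\cSfzF$ explicitly, and several others in the table carry dashed edges). Your proposed fix is to ``locally recolor white vertices (swapping colors along short chains inside $(G')^2$)''. This is not what the paper does, and it is not clear it can be made to work: $(G')^2$ is not planar, so Kempe-chain arguments have no a priori reason to terminate favorably, and you give no mechanism guaranteeing that such a swap produces a precoloring that \emph{does} extend. The paper instead adds a carefully chosen edge (or edges) to $G-H$ to form a graph $D$ that is still subcubic and planar; minimality of $G$ then yields a $7$-coloring of $D^2$, which is automatically a precoloring of $(G-H)^2$ satisfying an extra constraint (two specified white vertices receive distinct colors). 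The computer check then verifies that every precoloring obeying this constraint extends. This requires a separate argument, using $3$-connectivity from Lemma~\ref{cubic} and the reducibility of $\ctt$ and $\ctf$, that the endpoints of each added edge are genuinely distinct in $G$.

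A second, smaller gap: you say one enumerates ``every precoloring of the white vertices that can arise as a restriction of a $7$-coloring of $(G')^2$'', but we do not know $G'$ globally---only the local picture $F$ drawn in the table. The paper addresses this by arguing that any identifications among white vertices or extra adjacencies in $G$ only \emph{restrict} the set of precolorings, so it suffices to check all proper $7$-colorings of the white vertices in $F^2$ (or in the augmented graph $J$ when dashed edges are present). This monotonicity step, together with the verification that $(F[W])^2$ is exactly the subgraph of $F^2$ induced on the white vertices, is what justifies feeding a single finite graph to the program. Your sketch omits both of these points.
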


\begin{proof}
Let $G$ be a minimal counterexample to Theorem~\ref{main},
and suppose that a configuration $H$ from Table~\ref{reducible-configurations} appears in $G$.
Let $G-H$ denote the subgraph of $G$ induced by $V(G)-V(H)$.
Note that $(G-H)^2$ is $7$-colorable since $G-H$ is a subcubic planar graph with fewer vertices than $G$.
We call a $7$-coloring of $(G-H)^2$ a \emph{precoloring}.
To show that $H$ is reducible, we show that there exists a precoloring that extends to a $7$-coloring of $G^2$.

For most of our configurations, we verify that in fact every precoloring extends to a $7$-coloring of $G^2$.  
However, this is not true for some of our configurations, such as $\dtFf$ and $\cSfzF$.
To find a precoloring that extends, we add edges to $G-H$ to form a graph $D$.
The edges are added to $G-H$ so that $D$ is subcubic and planar.
Since $D$ has fewer vertices than $G$, $D^2$ is $7$-colorable.
Since $(G-H)^2$ is a subgraph of $D^2$, a $7$-coloring of $D^2$ is also a $7$-coloring of $(G-H)^2$, but is a precoloring with additional properties because of the extra edges in $D$.
We verify that every $7$-coloring of $D$ is a precoloring that extends to a $7$-coloring of $G^2$.

Edges that are added to $G-H$ to form $D$ are shown as dashed edges in the figures of Table~\ref{reducible-configurations}.
Note that edges are added only between vertices of $G-H$ (drawn as white vertices) that are neighbors of vertices in $H$ (drawn as black vertices).
The fact that the graph $D$ formed by adding dashed edges is subcubic and planar can be immediately verified by inspection of the figures.
However, we need to verify that the endpoints of an added dashed edge are in fact distinct vertices.

Let the vertices on the outer face of a configuration $H$ be labeled clockwise $v_1, \ldots, v_s$. 
If $v_i$ and $v_k$ are vertices of $H$ such that $\deg_H(v_i)=\deg_H(v_k)=2$, $2\leq |i-k|\leq 3$ and there is a single vertex $v_j$ such that $\deg_H(v_j)=2$ and $i < j < k$, then we claim that the neighbors of $v_i$ and $v_k$ in $G-H$ are distinct. 
If not, then let $u$ be the common neighbor of $v_i$ and $v_k$ in $G-H$. 
 
Since $G$ is cubic and 3-connected, there must exist a path between $u$ and $v_j$ which is inside the closed curve containing $u,v_i,v_j,v_k$, since otherwise $v_j$ is a cut vertex. 
If this path is of length greater than $1$, i.e. $uv_j\notin E(G)$, then $\{u, v_j\}$ is a cut set of size $2$, as shown in Figure~\ref{fig:AddEdge1}.
Otherwise $uv_j \in E(G)$, as shown in Figure~\ref{fig:AddEdge2}.
In this case, when $|i-k|=2$ (such as in $\dtFf$ and $\cfFzF$), $G$ contains $\ctt$, and when $|i-k|=3$ (as in $\cSfzF$ and $\cSfzzF$), $G$ contains $\ctf$, both of which are reducible configurations without adding any edges to $G-H$. 
Thus a dashed edge between the neighbors in $G-H$ of $v_i$ and $v_k$ can be added to form the graph $D$ and to find precolorings with desired properties.

\begin{figure}
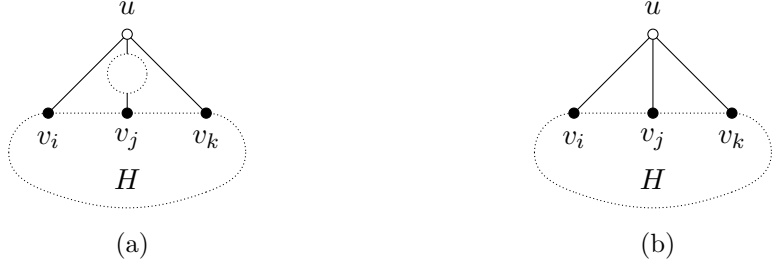

  \centering
    \begin{subfigure}[b]{0.4\textwidth}
    \centering
	\begin{tikzpicture}[scale = .35]
		\input{images/addedge1}
	\end{tikzpicture}

            \caption{}
            \label{fig:AddEdge1}
    \end{subfigure}
    ~ %
      \begin{subfigure}[b]{0.4\textwidth}
      \centering
	\begin{tikzpicture}[scale = .35]
		\input{images/addedge2}
	\end{tikzpicture}

            \caption{}
            \label{fig:AddEdge2}
    \end{subfigure}
    \caption{The cases that occur when the white neighbors of $v_i$ and $v_k$ are the same vertex.}
\end{figure} 

To extend a precoloring, we must $7$-color the vertices of $H$ in a way compatible with the precoloring on $(G-H)^2$.  Thus, we only need to consider the precoloring of vertices of $G-H$ that are within distance $2$ of a vertex of $H$.
In Table~\ref{reducible-configurations}, these vertices of $G-H$ are shown in white.

In general, each vertex of $H$ that has degree $2$ in $H$ is adjacent to one white vertex, which in turn is adjacent to two other white vertices.
However, it is possible that the white vertices as drawn may not all be distinct or there may be adjacencies between white vertices that are not drawn.
Figure~\ref{RestrictedPrecolorings} shows an example of such a situation.
Each precoloring of the white vertices restricts the colors that may appear on the black vertices of the configuration.
Additional relationships among the white vertices limits the precolorings that can appear on the white vertices, 
so the restrictions imposed by precoloring the white vertices are a subset of the restrictions imposed by precolorings of the white vertices when there are no additional relationships among the white vertices (except possibly the added dashed edges discussed above).
Thus if we show that every possible precoloring when assuming no vertex identifications of white vertices or additional adjacencies (beyond the dashed edges) can be extended to a coloring that includes $H$, then we have also shown that every precoloring with vertex identifications or additional adjacencies also extends to a coloring of $G^2$.

\begin{figure}
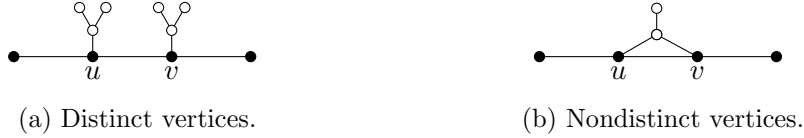

  \centering
    \begin{subfigure}[b]{0.4\textwidth}
    \centering
	\begin{tikzpicture}[scale = .35]
		\input{images/RestrictedPrecolorings}
	\end{tikzpicture}

        \caption{Distinct vertices.}
        \label{fig:DistinctVertices}
    \end{subfigure}
    ~ %
      \begin{subfigure}[b]{0.4\textwidth}
      \centering
	\begin{tikzpicture}[scale = .35]
		\input{images/RestrictedPrecoloringsNonDistinct}
	\end{tikzpicture}

        \caption{Nondistinct vertices.}
        \label{fig:NonDistinctVertices}
    \end{subfigure}
    \caption{Arrangements of the vertices of $G-H$.}
    \label{RestrictedPrecolorings}
\end{figure} 

The verification of extending precolorings is done using a program that verifies every proper $k$-coloring of specified vertices in an input graph can be extended to a proper $k$-coloring of the entire graph.
As the program acts on a single input graph, we need to ensure it correctly generates all precolorings (i.e., $7$-colorings of $G-H$ or $D$).
Let $H$ be a configuration in Table~\ref{reducible-configurations} with no dashed edges, and let $F$ be the graph (including the white vertices) shown for $H$ in Table~\ref{reducible-configurations}.
We input $F^2$ into the program, and ask if every $7$-coloring of the white vertices can be extended to a $7$-coloring of the black vertices.
Let $W$ denote the set of white vertices in $F$, and let $F[W]$ denote the subgraph of $F$ induced by $W$.
For each graph $F$ in the table, note that any two white vertices at distance $2$ are connected by a path of length $2$ that has a white internal vertex.
Then $(F[W])^2$ is the subgraph of $F^2$ induced by $W$, so every precoloring of the white vertices is a proper $7$-coloring of the white vertices in $F^2$, and hence is generated and checked by the program.

Suppose now that $H$ is a configuration in Table~\ref{reducible-configurations} with dashed edges, and again let $F$ be the graph (including the white vertices but not the dashed edges) shown for $H$ in Table~\ref{reducible-configurations}.
Let $F'$ denote $F$ with the dashed edges added.
As above, let $W$ denote the set of white vertices in $F$, and let $F'[W]$ denote the subgraph of $F'$ induced by $W$.
Let $J$ be the graph formed by adding the dashed edges to $F^2$ as well as edges between white vertices connected in $F'$ by a path of length $2$ consisting of one dashed edge and one edge of $F$.
Note that $J$ is a subgraph of $(F')^2$ but does not have all of the edges of $(F')^2$, as a white vertex and a black vertex at distance $2$ in $F'$ are not made adjacent in $J$.

We input the graph $J$ into the program to verify that every proper $7$-coloring of the white vertices extends to a $7$-coloring of the black vertices.
Since we only add edges between white vertices to $F^2$ to form $J$, $(F'[W])^2$ is the subgraph of $J$ induced by $W$, so again we have that every precoloring from $D$ of the white vertices is a proper $7$-coloring of the white vertices in $J$, and hence is generated and checked by the program.

\medskip

The details of the program and the computation verifying the reducibility of the configurations are discussed in Section~\ref{sec:computation}.
As a result of these computations, each configuration in Table~\ref{reducible-configurations} is reducible.
\end{proof}

To verify the reducibility of the configurations, we could have written code that generated every precoloring of $(F[W])^2$ or $(F'[W])^2$, and then in a separate program checked that the precoloring extended to $G^2$.
Instead we wrote a program that takes a single graph as input and that verifies precoloring extension for that graph.
The necessary program was simpler to write, easier to check for correctness, and easier to optimize and parallelize.
We also hope that the program may be of use to other researchers investigating precoloring extensions.

\section{Computation}\label{sec:computation}

\DeclareRobustCommand{\cpluspluslogo}{\hbox{C\hspace{-0.5ex}
                       \protect\raisebox{0.5ex}
                       {\protect\scalebox{0.67}{++}}}}

We wrote an optimized \cpluspluslogo{} program \texttt{precolor_extend}%
\footnote{The code is available at \url{http://www.math.ucdenver.edu/~hartkes/math/data/data.php}.}
to verify precoloring extensions.  
The input is the number $k$ of colors, the number $t$ of vertices to precolor, and a graph $G$ on the vertex set $\{0,1,\dots,n-1\}$.  
The program uses a greedy backtracking search to generate all proper precolorings of the vertices $\{0,1,\dots,t-1\}$ with the $k$ colors $\{0,1,\dots,k-1\}$, and then uses the same greedy backtracking method to see if the precoloring can be extended to a proper coloring of the entire graph with those $k$ colors.
The greedy coloring removes some of the symmetry present in colorings by coloring vertex $0$ with color $0$ and then coloring each new vertex with a previously used color or a new color (chosen to be the smallest unused color), instead of coloring the new vertex with all possible $k$ colors.

The program is implemented to be simple and very fast.  
Bit masks are used to test if any neighbors of a given vertex are colored with a specified color in constant time.
Symmetry of the graph in general is not exploited, except in the case that two consecutive vertices (in the graph ordering) have the same neighborhoods.
This occurs in our configurations for the two leaves on each added stem.
In this case, we require the color of the second vertex to be greater than the color on the first vertex.

For our graphs, we made another simplifying assumption that significantly reduces the number of precolorings that need to be checked.  
In each graph, white leaves restrict the color of only one black vertex.
If that black vertex is also distance $2$ from a white non-leaf $v$, then coloring the white leaves the same as $v$ is redundant.  
Thus we require that a white leaf is colored differently than any other white non-leaf that constrains the same black vertex.
We enforce this constraint by adding edges between the white leaf and these other white non-leaves.  
These edges are shown as dashed-dotted edges in Figure~\ref{fig:PrecoloringAssuption}.
Note that each pair of white leaves has at most three forbidden colors, coming from the common white neighbor and at most two ``$v$'' vertices, and so the two white leaves can always be colored to avoid those three colors.
These dashed-dotted edges are added after forming the square of the graph, and hence occur in the preparation of the input graph given to \texttt{precolor_extend}.
However, if the configuration required dashed edges to restrict the precolorings as described in the previous section, we did not add the dashed-dotted edges, as the interaction between the two types of added edges is hard to determine.

\begin{figure}
  \centering
	\begin{tikzpicture}[scale = .35]
		\input{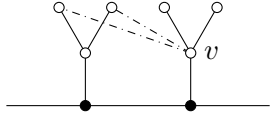}
	\end{tikzpicture}

      \caption{\label{fig:PrecoloringAssuption}Reducing the number of precolorings.}
\end{figure} 

As the running time can be long for large graphs, we enabled a ``root coloring'' to be given that fixes the colors of some of the precolored vertices.  This allowed the set of precolorings to be partitioned and each part checked in parallel.

We used Sage Mathematical Software~\cite{sage} to prepare the input files fed into the \texttt{precolor_extend} program.  
The computations verifying the configurations listed in Table~\ref{reducible-configurations} were performed on two machines.  
The smaller computations were done on the xvib server, a machine with four AMD Opteron~8350 2.0GHz processors, for a total of 16 CPU cores.  
The larger computations were performed on the colibri cluster, which has 48 Intel Xeon~E5-2670 2.6GHz processors, for a total of 384 CPU cores.  
Both machines run CentOS~6.7, and \texttt{precolor_extend} was compiled with GCC~4.4.7.  
Table~\ref{reducible-configurations} contains the running time for each configuration and which machine performed the computation.

\section{Discharging}\label{sec:discharging}

In this section, we prove using discharging that any minimal counterexample to Theorem~\ref{main} contains one of the reducible configurations from Table~\ref{reducible-configurations}.
We use face charging, and the following lemma gives the total initial charge.

\begin{lem}\label{lem:facesum}
If $G$ is a cubic planar graph with a fixed planar embedding, then
\[
\sum_{f\in F(G)} \left(\ell(f)-6\right) = -12.
\]
\end{lem}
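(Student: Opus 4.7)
The plan is to carry out the standard Euler-formula calculation, with the only small point of care being that the statement should be read for connected planar embeddings (the minimal counterexample in this paper is $3$-connected by Lemma~\ref{cubic}, so connectivity is safely in hand for the intended application).

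First, I would invoke the handshaking lemma for faces: since each edge lies on the boundary of exactly two faces (counted with multiplicity for bridges, although a $3$-edge-connected graph has none), we have
\[
\sum_{f \in F(G)} \ell(f) = 2|E(G)|.
\]
Next, because $G$ is cubic, the vertex handshaking lemma gives $3|V(G)| = 2|E(G)|$, i.e., $|V(G)| = \tfrac{2}{3}|E(G)|$. Applying Euler's formula $|V(G)| - |E(G)| + |F(G)| = 2$ for a connected planar embedding, I solve for $|F(G)|$:
\[
|F(G)| = 2 - |V(G)| + |E(G)| = 2 + \tfrac{1}{3}|E(G)|.
\]

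Putting these together,
\[
\sum_{f\in F(G)} (\ell(f) - 6) \;=\; \sum_{f\in F(G)} \ell(f) \;-\; 6|F(G)| \;=\; 2|E(G)| - 12 - 2|E(G)| \;=\; -12,
\]
which is the claim. There is essentially no obstacle here; the only remark worth making is that if one wants the lemma for an arbitrary (possibly disconnected) cubic planar embedding, Euler's formula becomes $|V|-|E|+|F| = 1 + c$ where $c$ is the number of connected components, and one would instead obtain $\sum_f (\ell(f)-6) = -6(1+c)$. In the intended application $c = 1$ by Lemma~\ref{cubic}, so $-12$ is exactly the right constant.
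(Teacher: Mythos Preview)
Your proof is correct and follows essentially the same approach as the paper: apply the handshaking lemma for faces ($\sum_f \ell(f)=2m$), the cubic condition ($3n=2m$), and Euler's formula to obtain the identity. The paper's version differs only cosmetically in how the substitution is arranged, and your remark about the disconnected case is a reasonable aside.
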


\begin{proof}
Let $G$ have $n$ vertices, $m$ edges, and $p$ faces, and let $F(G)$ denote the set of faces of $G$.  
The degree-sum formula applied to $G$ gives $3n=\sum_{v\in V(G)} \deg_G(v)=2m$, and applied to the dual of $G$ gives $\sum_{f\in F(G)} \ell(f)=2m$.  
Thus, $n-m=-\frac{1}{6} \sum_{f\in F(G)} \ell(f)$. 
Substituting into Euler's Formula $n-m+p=2$, we have that
\[
\left(-\frac{1}{6} \sum_{f\in F(G)} \ell(f) \right) +p =2.
\]
Thus,
\[
\sum_{f\in F(G)} \left(\ell(f) -6 \right) = -12.\qedhere
\]
\end{proof}

\begin{proof}[Proof of Theorem~\ref{main}]
Let $G$ be a minimal counterexample to the theorem.
By Lemma~\ref{lem:no-reducible-configs}, $G$ contains none of the $31$ reducible configurations listed in Table~\ref{reducible-configurations}.

Assign each face $f$ of $G$ an initial charge of $\ell(f)-6$. 
Since $G$ is cubic by Lemma~\ref{cubic}, Lemma~\ref{lem:facesum} implies that the total charge is $-12$.

Note that by Lemma~\ref{cubic}, $G$ is $3$-edge-connected, which implies that every face is a cycle and every edge separates two different faces.
Additionally, each $r$-face is adjacent to $r$ different faces.

We apply the following discharging rules. 

\begin{enumerate}[(R1)]
\item Each $3$-face receives charge $1$ from each adjacent $7^+$-face.
\item Each $4$-face receives charge $\frac{2}{3}$ from each adjacent $7^+$-face.
\item Each $5$-face receives charge $\frac{1}{3}$ from each adjacent $7^+$-face.
\end{enumerate}

We prove that the final charge on each face after discharging is nonnegative by considering faces by length.

\medskip\noindent{\emph{$3$-faces.}}
Each $3$-face has initial charge $-3$. 
Since $\ctsM$ are reducible configurations, each $3$-face has three adjacent $7^+$ faces and so receives $3$ charge.
Since a $3$-face gives no charge, each $3$ face has final charge $0$. 

\medskip\noindent{\emph{$4$-faces.}}
Each $4$-face has initial charge $-2$. 
Since the configurations $\ctf$, $\cff$, $\cfFF$, $\cfFs$, $\cfss$, $\cfFzF$, $\cfFzs$, and $\cfszs$ are reducible, every $4$-face has at least three adjacent $7^+$-faces. 
As a result, each $4$-face receives at least $3(\frac{2}{3})$ charge and gives no charge. 
Therefore $4$-faces have final charge at least $0$.

\medskip\noindent{\emph{$5$-faces.}}
Let $f$ be a $5$-face.  The initial charge of $f$ is $-1$.
Reducibility of $\ctF$ implies that $f$ has no adjacent $3$-face. If $f$ has an adjacent $4$-face, then by reducibility of $\dfofo$, $f$ has no other adjacent $4$-face. By the reducibility of $\cfFF$ and $\cfFs$ the faces adjacent to $f$ and the $4$-face must be $7^+$-faces. The faces adjacent to $f$ at distance 1 from the $4$-face must be $6^+$-faces by the reducibility of $\cFfzF$, moreover one of the faces must be a $7^+$-face by the reducibility of $\cFss$.

If $f$ has no adjacent $4$-face, then since the configurations $\cFFs$, $\cFss$, $\rFFF$, and $\cFFFzs$ are reducible, $f$ has at least three adjacent $7^+$-faces. 
In both cases, the $5$-face $f$ gives no charge and receives total charge at least of $3(\frac{1}{3})$. 
Therefore $f$ has final charge at least $0$.

\medskip\noindent{\emph{$6$-faces.}}
Each $6$-face has initial charge $0$.
According to our discharging rules, $6$-faces do not give or receive charge, hence their final charge remains $0$.

\medskip\noindent{\emph{$7$-faces.}}
Let $f$ be a $7$-face.  The initial charge of $f$ is $1$.
First suppose that $f$ has an adjacent $3$-face. Reducibility of $\dthth$ and $\dthfo$ imply that every other face adjacent to $f$ has length at least $5$. However, the reducibility of  $\ctF$, $\cStzF$, and $\cStzzF$ imply that $f$ has no adjacent $5$-face. Therefore $f$ loses total charge of $1$ in this case, resulting in a final charge of $0$.

If $f$ has no adjacent $3$-face but has an adjacent $4$-face, then by the reducibility of $\dfofo$, $f$ has no other adjacent $4^-$-face.
By reducibility of $\cfFzF$, $\cSfzF$, and $\cSfzzF$, the face $f$ has at most one adjacent $5$-face.
Hence $f$ loses at most $\frac{2}{3}$ charge to an adjacent $4$-face and $\frac{1}{3}$ charge to an adjacent $5$-face, resulting in a final charge at least $0$. 

If $f$ has no adjacent $3$-face and no adjacent $4$-face, then by the reducibility of $\rFFF$ and $\cSFFzF$, the face $f$ has at most three adjacent $5$-faces.
Therefore $f$ loses at most $3(\frac{1}{3})$ charge. 
As a result, $f$ has a nonnegative final charge. 

\medskip\noindent{\emph{$8$-faces.}}
Each $8$-face has initial charge $2$. 
Let $f$ be an $8$-face having an adjacent $3$-face. 
The reducibility of $\ctsM$, $\dthth$, and $\dthfo$ imply that $f$ has no other adjacent $3$-face and no adjacent $4$-face. 
Moreover, since $\rFFF$ and $\cetzFFzFF$ are reducible, $f$ has at most three adjacent $5$-faces in this case. 
Therefore $f$ loses at most $1+3(\frac{1}{3})$ charge, resulting in a nonnegative final charge.

Let $f$ be an $8$-face having an adjacent $4$-face and no adjacent $3$-face. 
Since $\cff$ and $\dfofo$ are reducible, $f$ has no other adjacent $4$-face.  
Since $\rFFF$ and $\cfFzF$ are reducible, $f$ has at most four adjacent $5$-faces.
Therefore, $f$ loses at most $\frac{2}{3}+4(\frac{1}{3})$ charge, resulting in a nonnegative final charge.

Let $f$ be an $8$-face having no adjacent $4^-$-face. The reducibility of $\rFFF$ implies that $f$ has at most five adjacent $5$-faces. Therefore $f$ loses at most $5(\frac{1}{3})$ charge. As a result, $f$ has final charge at least $\frac{1}{3}$.

\begin{figure}
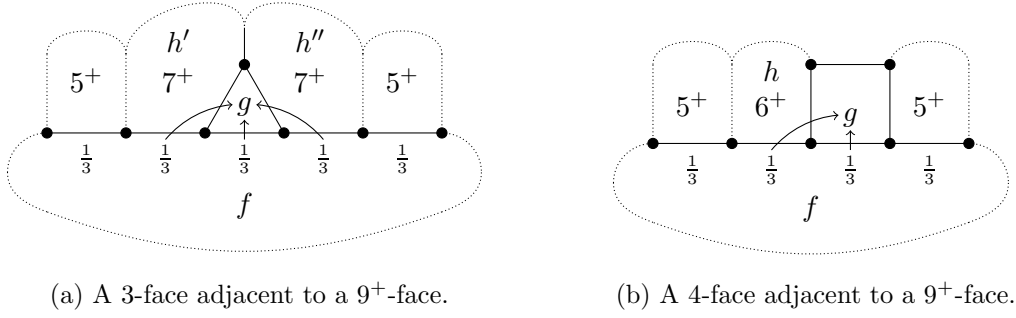

    \centering
    \begin{subfigure}[b]{0.45\textwidth}
       \centering
	\begin{tikzpicture}[scale = .35]
		\input{images/discharge3}
	\end{tikzpicture}

            \caption{A $3$-face adjacent to a $9^+$-face.}
        \label{fig:3face}
    \end{subfigure}
    \quad %
    \begin{subfigure}[b]{0.4\textwidth}
        \centering
	\begin{tikzpicture}[scale = .35]
		\input{images/discharge4}
	\end{tikzpicture}

            \caption{A $4$-face adjacent to a $9^+$-face.}
        \label{fig:4face}
    \end{subfigure}
       \caption{Discharging from a $9^+$-face.}\label{fig:discharging}
\end{figure}

\medskip\noindent{\emph{$9^+$-faces.}}
Let $f$ be a $9^+$-face with length $r$.  The initial charge of $f$ is $r-6$.
To achieve the transfer of charge to neighboring $5^-$-faces described in rules R1--R3, we will first do a preliminary step that transfers charge from $f$ to its bounding edges.
The charge on the edges will then be transferred to adjacent faces, completing the discharging from $f$.

We allocate $\frac{1}{3}$ charge from $f$ to each edge bounding $f$.  
Since $r\ge 9$, then $r-6\ge \frac{1}{3} r$, so the initial charge of $f$ is large enough to perform this allocation.

If $g$ is a $5$-face adjacent to $f$, then we transfer the $\frac{1}{3}$ charge from the edge separating $f$ and $g$ to $g$.  

Let $g$ be a $3$-face adjacent to $f$.
Since the configurations $\ctsM$ are reducible, the two faces $h'$ and $h''$ adjacent to $f$ and $g$ have length at least $7$.
Figure~\ref{fig:3face} depicts this situation.
We transfer the total charge of $1$ on the three edges separating $f$ from $g$, $h'$, and $h''$ to $g$.
Because the configurations $\dthth$ and $\dthfo$ are reducible, the other face adjacent to $f$ and $h'$ has length at least $5$.  
Similarly, the other face adjacent to $f$ and $h''$ also has length at least $5$.
Neither of these two faces will pull charge from the edges separating $f$ from $h'$ and $h''$.

Let $g$ be a $4$-face adjacent to $f$. 
Since configurations $\cff$ and $\cfFzF$ are reducible, at least one of the two faces adjacent to $f$ and $g$ has length at least $6$; call this face $h$ as shown in Figure~\ref{fig:4face}.
We transfer the $\frac{2}{3}$ total charge on the two edges separating $f$ from $g$ and $h$ to $g$.
Because the configurations $\dthfo$ and $\dfofo$ are reducible, the other face adjacent to $f$ and $g$ has length at least $5$.
Similarly, the other face adjacent to $f$ and $h$ also has length at least $5$.
Neither of these two faces will pull charge from the edges separating $f$ from $g$ and $h$.

In all cases, $f$ transfers the charge specified by the discharging rules R1--R3 to adjacent faces, and the final charge of $f$ is nonnegative.

\bigskip

Thus we have shown that after discharging, each face has a nonnegative final charge.  
Since no charge is lost or gained during discharging, this gives us a contradiction of the initial total charge being negative.  
Hence no minimal counterexample exists, and so the square of every subcubic planar graph is $7$-colorable.
\end{proof}

\section{Future work}\label{sec:future}

All of the sharpness examples described after Theorem~\ref{main} have $5$-faces and vertex cutsets of size at most $2$.  These examples motivate the following conjectures refining Theorem~\ref{main}.

\begin{conj}
  If $G$ is a subcubic planar graph drawn without any faces of length 5, then $\chi(G^2)\leq 6$.
\end{conj}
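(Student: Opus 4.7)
The strategy is to repeat the proof template of Theorem~\ref{main} with $7$ replaced by $6$. Let $G$ be a minimal counterexample: a subcubic planar graph with no face of length $5$ whose square is not $6$-colorable. First, I would establish an analogue of Lemma~\ref{cubic}. A vertex $v$ of degree $1$ has at most $3$ neighbors in $G^2$, so $G-v$ can be $6$-colored and extended trivially. The arguments for $3$-connectivity and $3$-edge-connectivity in Lemma~\ref{cubic} essentially carry over, though each color-counting bound must be rechecked for the drop from $7$ to $6$; in particular the four boundary vertices $x_1, x_2, y_1, y_2$ now forbid at most three colors each in the extension step, which is still enough. The truly delicate case is a vertex $v$ of degree $2$ with two degree-$3$ neighbors that are not adjacent: $v$ has up to $6$ neighbors in $G^2$, exactly the number of available colors. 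To resolve this, I would follow the strategy of adding an edge between the two neighbors of $v$ in $G-v$ (which remains planar and subcubic), forcing two of the six vertices to receive different colors; the remaining tight cases should be absorbed into reducible configurations verified by the same \texttt{precolor_extend} framework with $k=6$.

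The initial charge $\ell(f)-6$ continues to sum to $-12$ by Lemma~\ref{lem:facesum}. Without $5$-faces, every face has length in $\{3,4\}\cup\{6,7,8,\ldots\}$, so $6$-faces begin with charge $0$ and cannot help any $3$- or $4$-face. The natural discharging rules would send $1$ from each $7^+$-face to each adjacent $3$-face and $\frac{2}{3}$ from each $7^+$-face to each adjacent $4$-face, possibly routed through bounding edges as in the $9^+$-face analysis of Theorem~\ref{main}. To make this scheme work, new reducible configurations are required: most notably, no $3$-face may be adjacent to a $6$-face (a strengthening of $\ctsM$ in Table~\ref{reducible-configurations}), each $4$-face must have enough $7^+$-neighbors, and no two small faces may be too close. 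Many of the configurations in Table~\ref{reducible-configurations} involving $5$-faces are vacuous under the new hypothesis, but I would expect analogous configurations with a $6$-face playing the role the $5$-face used to play — for instance, a $4$-face adjacent to two $6$-faces, or a $7$-face adjacent to a $3$-face and a $6$-face — to take their place.

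The principal obstacle is the reducibility search itself. Because the conjectured bound $6$ is essentially tight under the $5$-face hypothesis, the set of configurations reducible at the $6$-color level will be substantially thinner than that reducible at the $7$-color level, and many appealing local structures will fail the precoloring-extension test. I expect the computational search to require a richer vocabulary of configurations, involving patterns of $3$- and $4$-faces mediated by chains of $6$-faces, together with a carefully balanced discharging scheme that may need to move fractional charge across several bounding edges of a $7^+$-face before delivering it to its destination. Extending Lemma~\ref{cubic} past the degree-$2$ case, verifying reducibility of a correspondingly larger set of configurations, and then confirming that the final charge is nonnegative on faces of every length will together constitute a noticeably more delicate case analysis than the one carried out for Theorem~\ref{main}.
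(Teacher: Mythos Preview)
The statement you are addressing is a \emph{conjecture} in the paper, not a theorem: it appears in Section~\ref{sec:future} (Future work) with no proof, and the paper makes no claim that the discharging framework of Theorem~\ref{main} extends to it. So there is no ``paper's own proof'' against which to compare your proposal.

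Your write-up is correspondingly not a proof but a plausible research plan, and you are candid about this: you identify the degree-$2$ vertex case as genuinely tight for $6$ colors, you note that the reducible-configuration list would have to be rebuilt with $k=6$, and you flag the computational search as the principal obstacle. These are all accurate assessments. But none of the key steps is actually carried out. In particular, you have not exhibited a single configuration that is provably reducible for $6$-coloring under the no-$5$-face hypothesis, nor have you verified that any specific discharging scheme balances. The analogue of Lemma~\ref{cubic} already has a real issue: a degree-$2$ vertex $v$ with nonadjacent degree-$3$ neighbors has exactly six neighbors in $G^2$, and your suggestion of adding an edge between the neighbors of $v$ does not by itself free up a color for $v$ (those two neighbors were already adjacent in $G^2$ anyway). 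Until at least one reducible configuration and one consistent charge balance are established, what you have is a program, not a proof.
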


\begin{conj}\label{conj:3-conn}
  If $G$ is a 3-connected cubic planar graph, then $\chi(G^2)\leq 6$.
\end{conj}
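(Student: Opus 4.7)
The plan is to follow the blueprint of Theorem~\ref{main}: combine a discharging argument with automated verification of reducible configurations, but now targeting $\chi(G^2)\le 6$ and exploiting the $3$-connectivity hypothesis. Let $G$ be a $3$-connected cubic planar graph with $\chi(G^2)\ge 7$ and fewest vertices. By Steinitz's theorem every face of $G$ is a cycle and no two faces share more than an edge, so the local face geometry is clean. The first obstacle is that the minimality argument of Lemma~\ref{cubic} does not immediately transfer: deleting a configuration $H$ from a $3$-connected graph and adding the stabilizing edges described in Section~\ref{sec:reducible} can easily produce a reduced graph that is no longer $3$-connected, so the inductive hypothesis fails to apply. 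I would fix this by inducting on a larger class---say, cyclically $4$-edge-connected subcubic planar graphs, or more robustly by proving the stronger statement that any subcubic planar graph avoiding an explicit list of small patterns satisfies $\chi(G^2)\le 6$---so that the reductions remain inside the inductive class.

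Next I would compile a new list of reducible configurations. The \texttt{precolor_extend} program in Section~\ref{sec:computation} applies directly with the color-count parameter set to $k=6$, so the computational infrastructure is in place. However, reducibility is strictly harder with $6$ colors, because each degree-$3$ vertex already has six distinct neighbors in $G^2$; precoloring them leaves no free color and forces much tighter extension arguments. Most of the $31$ configurations in Table~\ref{reducible-configurations} will need either to be strengthened or to be replaced by larger configurations, and new families of $5$-face-rich configurations should be introduced (since the $7$-color sharpness examples all involve $5$-faces near small vertex cuts, which are exactly what $3$-connectivity is meant to exclude). The $3$-connectivity assumption also rules out additional degenerate identifications among the ``white'' vertices of a configuration, as in the $|i-k|\in\{2,3\}$ analysis inside Lemma~\ref{lem:no-reducible-configs}, permitting somewhat larger configurations to qualify.

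Finally I would redesign the discharging rules. The initial face charge $\ell(f)-6$ still sums to $-12$, and $6$-faces still carry zero charge, but sharpness examples such as the triangular prism (whose square is $K_6$) leave essentially no slack, so the rules R1--R3 will need to be refined into finer fractional transfers, very likely with an edge-bank step analogous to the $9^+$-face case in the proof of Theorem~\ref{main} so that charge can be routed cleanly around large faces. The main obstacle I foresee is exactly this tightness: because the target bound of $6$ is essentially optimal for $3$-connected cubic planar graphs, the discharging must balance on a knife's edge, and pushing every face to nonnegative final charge while simultaneously certifying a reducible configuration in every residual minimum counterexample will likely require a substantially longer and more intricate reducible set than the one in Table~\ref{reducible-configurations}. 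A secondary obstacle is keeping the discharging rules simple enough to verify by hand, as was a design goal here; if this becomes infeasible, one could fall back to computer-assisted verification of the rules themselves, as in the later Four Color Theorem proofs of Robertson--Sanders--Seymour--Thomas and Steinberger.
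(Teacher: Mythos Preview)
The statement you are addressing is a \emph{conjecture}, not a theorem; the paper poses it in Section~\ref{sec:future} as open and offers no proof. There is therefore nothing in the paper to compare your attempt against.

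What you have written is not a proof but a research outline: you propose to rerun the discharging-plus-computer-reducibility pipeline with $k=6$, adjust the inductive class to survive the reductions, enlarge the configuration list, and retune the discharging rules. Each of these steps is speculative, and you yourself flag the central obstacle correctly: with only six colors a degree-$3$ vertex has exactly six neighbors in $G^2$, so there is no slack at all, and the paper explicitly reports (in Section~\ref{sec:future}) that natural candidates such as $4\mathord{\circlearrowleft}5$ and $5\mathord{\circlearrowleft}55$ are \emph{not} reducible even for seven colors despite effort. There is no evidence that any feasible set of reducible configurations exists for six colors, and your proposal does not supply one. Until a concrete unavoidable set is exhibited and its reducibility is actually verified, the argument has a genuine gap at its core; what you have is a plausible plan of attack on an open problem, not a proof.
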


Conjecture~\ref{conj:3-conn} was also made by Jensen and Toft~\cite{JT1995}.

The \texttt{precolor_extend} program enabled us to experiment with many configurations to explore reducibility.  
Some reducible configurations that we found (such as $7\mathord{\circlearrowleft}45$) were not necessary for our discharging proof.
Other configurations (such as $4\mathord{\circlearrowleft}5$ and $5\mathord{\circlearrowleft}55$) were not reducible, despite our best efforts in restricting the precolorings.  
Surprisingly, identification of white vertices to restrict the precolorings was unhelpful in proving reducibility.
It would be interesting to determine the minimum number of reducible configurations needed to give a discharging proof of Theorem~\ref{main}.

\section*{Acknowledgements}
The colibri cluster is hosted by the Center for Computational Mathematics at the University of Colorado Denver and was funded by National Science Foundation grant 0958354.
The authors thank 
Jennifer Diemunsch for helpful discussions
and Carsten Thomassen for encouraging them to pursue the approach described in this paper.

{
\small
\frenchspacing
\setlength{\itemsep}{0em}
\setlength{\parskip}{0.25em}

\bibliography{references}
\bibliographystyle{plain}
}

\section{List of reducible configurations}\label{sec:table}
\begin{longtable}{ | m{5cm} | c | }
    \caption{Reducible Configurations}
    \label{reducible-configurations}
    \endfirsthead
    \endhead
    \hline
    Data & Reducible Configuration \\ \hline
      \begin{itemize}
        \item  $\ctt$%
        \item \PC 15 %
        \item \RT less than 1s%
      \end{itemize}
    & 
    \begin{minipage}{.5\textwidth}
      \begin{center}\vspace*{5pt}\vspace*{5pt}
	\begin{tikzpicture}[scale = .35]
		\input{images/c33.tex}
	\end{tikzpicture}

      \vspace*{5pt}\end{center}
      
      \textbf{Note:} This configuration is also reducible by Lemma~\ref{cubic}.
    \end{minipage}
    \\ \hline

     \begin{itemize}
        \item $\ctf$ %
        \item \PC 333%
        \item \RT less than 1s%
      \end{itemize}
    & 
    \begin{minipage}{.5\textwidth}
      \begin{center}\vspace*{5pt}
	\begin{tikzpicture}[scale = .35]
		\input{images/c34.tex}
	\end{tikzpicture}

      \vspace*{5pt}\end{center}
    \end{minipage}
    \\ \hline

     \begin{itemize}
        \item $\ctF$ %
        \item \PC 13,076%
        \item \RT less than 1s%
      \end{itemize}
    & 
    \begin{minipage}{.5\textwidth}
      \begin{center}\vspace*{5pt}
	\begin{tikzpicture}[scale = .35]
		\input{images/c35.tex}
	\end{tikzpicture}

      \vspace*{5pt}\end{center}
    \end{minipage}
    \\ \hline

     \begin{itemize}
        \item $\cts$ %
        \item \PC 608,261%
        \item \RT less than 1s%
      \end{itemize}
    & 
    \begin{minipage}{.5\textwidth}
      \begin{center}\vspace*{5pt}
	\begin{tikzpicture}[scale = .35]
		\input{images/c36.tex}
	\end{tikzpicture}

      \vspace*{5pt}\end{center}
    \end{minipage}
    \\ \hline

     \begin{itemize}
        \item $\dtft$ %
        \item \PC 7,458%
        \item \RT less than 1s%
      \end{itemize}
    & 
    \begin{minipage}{.5\textwidth}
      \begin{center}\vspace*{5pt}
	\begin{tikzpicture}[scale = .35]
		\input{images/d343.tex}
	\end{tikzpicture}

      \vspace*{5pt}\end{center}
    \end{minipage}
    \\ \hline

     \begin{itemize}
        \item $\dtFt $%
        \item \PC 662,720%
        \item \RT 4s%
      \end{itemize}
    & 
    \begin{minipage}{.5\textwidth}
      \begin{center}\vspace*{5pt}
	\begin{tikzpicture}[scale = .35]
		\input{images/d353.tex}
	\end{tikzpicture}

      \vspace*{5pt}\end{center}
    \end{minipage}
    \\ \hline

     \begin{itemize}
        \item $\dtst$ %
        \item \PC 33,059,884%
        \item \RT 11s%
      \end{itemize}
    & 
    \begin{minipage}{.5\textwidth}
      \begin{center}\vspace*{5pt}
	\begin{tikzpicture}[scale = .35]
		\input{images/d363.tex}
	\end{tikzpicture}

      \vspace*{5pt}\end{center}
    \end{minipage}
    \\ \hline

     \begin{itemize}
        \item $\dtff$ %
        \item \PC 357,823%
        \item \RT less than 1s%
      \end{itemize}
    & 
    \begin{minipage}{.5\textwidth}
      \begin{center}\vspace*{5pt}
	\begin{tikzpicture}[scale = .35]
		\input{images/d344.tex}
	\end{tikzpicture}

      \vspace*{5pt}\end{center}
    \end{minipage}
    \\ \hline

     \begin{itemize}
        \item $\dtFf$ %
        \item \PC 138,328,236%
        \item \RT 34s%
      \end{itemize}
    & 
    \begin{minipage}{.5\textwidth}
      \begin{center}\vspace*{5pt}
	\begin{tikzpicture}[scale = .35]
		\input{images/d354.tex}
	\end{tikzpicture}

      \vspace*{5pt}\end{center}
    \end{minipage}
    \\ \hline

     \begin{itemize}
        \item $\dtsf$ %
        \item \PC 13,344,796,170%
        \item \RT 1h 10m 58s%
      \end{itemize}
    & 
    \begin{minipage}{.5\textwidth}
      \begin{center}\vspace*{5pt}
	\begin{tikzpicture}[scale = .35]
		\input{images/d364.tex}
	\end{tikzpicture}

      \vspace*{5pt}\end{center}
    \end{minipage}
    \\ \hline

     \begin{itemize}
        \item $\cff$
        \item \PC 13,488 %
        \item \RT less than 1s%
      \end{itemize}
    & 
    \begin{minipage}{.5\textwidth}
      \begin{center}\vspace*{5pt}
	\begin{tikzpicture}[scale = .35]
		\input{images/c44.tex}
	\end{tikzpicture}

      \vspace*{5pt}\end{center}
    \end{minipage}
    \\ \hline

     \begin{itemize}
        \item $\dfff$%
        \item \PC 134,815,734%
        \item \RT 51s%
      \end{itemize}
    & 
    \begin{minipage}{.5\textwidth}
      \begin{center}\vspace*{5pt}
	\begin{tikzpicture}[scale = .35]
		\input{images/d444.tex}
	\end{tikzpicture}

      \vspace*{5pt}\end{center}
    \end{minipage}
    \\ \hline

     \begin{itemize}
        \item $\dfFf$ %
        \item \PC 5,064,449,220%
        \item \RT 25m 41s%
      \end{itemize}
    & 
    \begin{minipage}{.5\textwidth}
      \begin{center}\vspace*{5pt}
	\begin{tikzpicture}[scale = .35]
		\input{images/d454.tex}
	\end{tikzpicture}

      \vspace*{5pt}\end{center}
    \end{minipage}
    \\ \hline

     \begin{itemize}
        \item $\dfsf$ %
        \item \PC 500,635,773,360%
        \item \RTC  12h 46m 49s%
      \end{itemize}
    & 
    \begin{minipage}{.5\textwidth}
      \begin{center}\vspace*{5pt}
	\begin{tikzpicture}[scale = .35]
		\input{images/d464.tex}
	\end{tikzpicture}

      \vspace*{5pt}\end{center}
    \end{minipage}
    \\ \hline

     \begin{itemize}
        \item $\cfFF$ %
        \item \PC 1,105,527%
        \item \RT 2s%
      \end{itemize}
    & 
    \begin{minipage}{.5\textwidth}
      \begin{center}\vspace*{5pt}
	\begin{tikzpicture}[scale = .35]
		\input{images/c455.tex}
	\end{tikzpicture}

      \vspace*{5pt}\end{center}
    \end{minipage}
    \\ \hline

     \begin{itemize}
        \item $\cfFs$ %
        \item \PC 54,788,105%
        \item \RT 24s%
      \end{itemize}
    & 
    \begin{minipage}{.5\textwidth}
      \begin{center}\vspace*{5pt}
	\begin{tikzpicture}[scale = .35]
		\input{images/c456.tex}
	\end{tikzpicture}

      \vspace*{5pt}\end{center}
    \end{minipage}
    \\ \hline

     \begin{itemize}
        \item $\cfss$ %
        \item \PC 2,802,164,397%
        \item \RT 20m 40s%
      \end{itemize}
    & 
    \begin{minipage}{.5\textwidth}
      \begin{center}\vspace*{5pt}
	\begin{tikzpicture}[scale = .35]
		\input{images/c466.tex}
	\end{tikzpicture}

      \vspace*{5pt}\end{center}
    \end{minipage}
    \\ \hline

     \begin{itemize}
        \item $\cfFzF$ %
        \item \PC 51,915,036%
        \item \RT 24s%
      \end{itemize}
    & 
    \begin{minipage}{.5\textwidth}
      \begin{center}\vspace*{5pt}
	\begin{tikzpicture}[scale = .35]
		\input{images/a4505.tex}
	\end{tikzpicture}

      \vspace*{5pt}\end{center}
    \end{minipage}
    \\ \hline

     \begin{itemize}
        \item $\cfFzs$ %
        \item \PC 1555301069%
        \item \RT 11m 15s%
      \end{itemize}
    & 
    \begin{minipage}{.5\textwidth}
      \begin{center}\vspace*{5pt}
	\begin{tikzpicture}[scale = .35]
		\input{images/a4506.tex}
	\end{tikzpicture}

      \vspace*{5pt}\end{center}
    \end{minipage}
    \\ \hline

     \begin{itemize}
        \item $\cfszs$ %
        \item \PC 1,321,455,661,506%
        \item \RT 7d 3h 51m 16s%
      \end{itemize}
    & 
    \begin{minipage}{.5\textwidth}
      \begin{center}\vspace*{5pt}
	\begin{tikzpicture}[scale = .35]
		\input{images/a4606.tex}
	\end{tikzpicture}

      \vspace*{5pt}\end{center}
    \end{minipage}
    \\ \hline

     \begin{itemize}
        \item $\cFFs$
        \item \PC 2,846,374,664%
        \item \RT 37m 30s%
      \end{itemize}
    & 
    \begin{minipage}{.5\textwidth}
      \begin{center}\vspace*{5pt}
	\begin{tikzpicture}[scale = .35]
		\input{images/c556.tex}
	\end{tikzpicture}

      \vspace*{5pt}\end{center}
    \end{minipage}
    \\ \hline

     \begin{itemize}
        \item $\cFss$ %
        \item \PC 146,973,280,732%
        \item \RT 23h 50m 57s%
      \end{itemize}
    & 
    \begin{minipage}{.5\textwidth}
      \begin{center}\vspace*{5pt}
	\begin{tikzpicture}[scale = .35]
		\input{images/c566.tex}
	\end{tikzpicture}

      \vspace*{5pt}\end{center}
    \end{minipage}
    \\ \hline

     \begin{itemize}
        \item $\cFfzF$ %
        \item \PC 54,788,105%
        \item \RT 21s%
      \end{itemize}
    & 
    \begin{minipage}{.5\textwidth}
      \begin{center}\vspace*{5pt}
	\begin{tikzpicture}[scale = .35]
		\input{images/a5405.tex}
	\end{tikzpicture}

      \vspace*{5pt}\end{center}
    \end{minipage}
    \\ \hline
    
     \begin{itemize}
        \item $\rFFF$ %
        \item \PC 5,030,476,776%
        \item \RT 1h 3m 42s%
      \end{itemize}
    & 
    \begin{minipage}{.5\textwidth}
      \begin{center}\vspace*{5pt}
	\begin{tikzpicture}[scale = .35]
		\input{images/r555.tex}
	\end{tikzpicture}

      \vspace*{5pt}\end{center}
    \end{minipage}
    \\ \hline

     \begin{itemize}
        \item $\cFFFzs$ %
        \item \PC 146,835,390,410%
        \item \RTC 8h 14m 34s%
      \end{itemize}
    & 
    \begin{minipage}{.5\textwidth}
      \begin{center}\vspace*{5pt}
	\begin{tikzpicture}[scale = .35]
		\input{images/a55506.tex}
	\end{tikzpicture}

      \vspace*{5pt}\end{center}
    \end{minipage}
    \\ \hline

     \begin{itemize}
        \item $\cStzF$ %
        \item \PC 2,894,951,658%
        \item \RT 19m 39s%
      \end{itemize}
    & 
    \begin{minipage}{.5\textwidth}
      \begin{center}\vspace*{5pt}
	\begin{tikzpicture}[scale = .35]
		\input{images/a7305.tex}
	\end{tikzpicture}

      \vspace*{5pt}\end{center}
    \end{minipage}
    \\ \hline

     \begin{itemize}
        \item $\cStzzF$ %
        \item \PC 5,213,220,693%
        \item \RT 35m 12s%
      \end{itemize}
    & 
    \begin{minipage}{.5\textwidth}
      \begin{center}\vspace*{5pt}
	\begin{tikzpicture}[scale = .35]
		\input{images/a73005.tex}
	\end{tikzpicture}

      \vspace*{5pt}\end{center}
    \end{minipage}
    \\ \hline

     \begin{itemize}
        \item $\cSfzF$ %
        \item \PC 1,289,759,962,338%
        \item \RTC 2d 2h 23m 10s%
      \end{itemize}
    & 
    \begin{minipage}{.5\textwidth}
      \begin{center}\vspace*{5pt}
	\begin{tikzpicture}[scale = .35]
		\input{images/a7405.tex}
	\end{tikzpicture}

      \vspace*{5pt}\end{center}
    \end{minipage}
    \\ \hline

     \begin{itemize}
        \item $\cSfzzF$ %
        \item \PC 497,187,055,368%
        \item \RTC 16h 47m 53s%
      \end{itemize}
    & 
    \begin{minipage}{.5\textwidth}
      \begin{center}\vspace*{5pt}
	\begin{tikzpicture}[scale = .35]
		\input{images/a74005.tex}
	\end{tikzpicture}

      \vspace*{5pt}\end{center}
    \end{minipage}
    \\ \hline

     \begin{itemize}
        \item $\cSFFzF$ %
        \item \PC 14,404,843,278,891%
        \item \RTC 32d 2h 12m 28s%
      \end{itemize}
    & 
    \begin{minipage}{.5\textwidth}
      \begin{center}\vspace*{5pt}
	\begin{tikzpicture}[scale = .35]
		\input{images/a75505.tex}
	\end{tikzpicture}

      \vspace*{5pt}\end{center}
    \end{minipage}
    \\ \hline

     \begin{itemize}
        \item $\cetzFFzFF$ %
        \item \PC 26,780,028,741,288%
        \item \RTC 55d 22h 42m 28s%
      \end{itemize}
    & 
    \begin{minipage}{.5\textwidth}
      \begin{center}\vspace*{5pt}
	\begin{tikzpicture}[scale = .35]
		\input{images/a83055055.tex}
	\end{tikzpicture}

      \vspace*{5pt}\end{center}
    \end{minipage}
    \\ \hline 
      \end{longtable}
      
Runtime$^\star$ denotes total CPU time on the xvib server, a machine with four AMD Opteron 8350 2.0GHz processors, for a total of 16 CPU cores. Runtime$^\dagger$ denotes total CPU time on the colibri cluster, which has 48 Intel Xeon E5-2670 2.6GHz processors, for a total of 384 CPU cores.

\end{document}